\documentclass[12pt]{article}

\usepackage{amsfonts}
\usepackage{amsthm}
\usepackage{amsmath}
\usepackage{amssymb}

\usepackage{appendix}

\usepackage{comment}
\usepackage{xcolor}

\usepackage{marginnote}

\newcommand{\dR}{\mathbb{R}}

\newcommand\Hd{{\mathbb{H}^d}}

\newcommand{\bE}{\mathbf{E}}
\newcommand{\bP}{\mathbf{P}}

\newcommand{\FHd}{\mathcal{F}(\Hd)}

\newcommand{\lL}{\mathcal{L}}

\newcommand{\lV}{\mathcal{V}}

\newcommand{\hOmega}{{\widehat{\Omega}}}

\newcommand{\dd}{\mathrm{d}}

\DeclareMathOperator{\spec}{Spec}
\DeclareMathOperator{\vol}{vol}

\theoremstyle{plain}
\newtheorem{theorem}{Theorem}

\newtheorem{lemma}{Lemma}[section]

\theoremstyle{definition}
\newtheorem{definition}[lemma]{Definition}

\newtheorem{example}[lemma]{Example}

\newtheorem{question}[lemma]{Question}

\theoremstyle{remark}
\newtheorem{remark}[lemma]{Remark}

\title{Quenched path limits and periodization stability for tilted Brownian motion in Poissonian potentials on $\mathbb{H}^d$}
\author{Miklos Abert, Adam Arras, Jaelin Kim}

\begin{document}

\maketitle

\begin{abstract}
We analyze the existence of Brownian motion tilted by a potential of full support on hyperbolic spaces $\Hd$. On compact spaces, it is classical that these path limits, called Q-processes, exist and can be directly defined using the ground state of the corresponding Schr\"odinger operator. On non-compact spaces like $\Hd$, the existence fails in general. 

We show that for \emph{stationary random} potentials on $\Hd$ with suitable spectral and sup norm bounds, the Q-processes exist a.s. For potentials that are factors of a Poisson point process, the method works up to sup norm $(d-1)^2/8$. In this case, we also show that the path limit can be approximated by periodic potentials.

As a tool, we use the foliated space defined by the point process. It turns out that the global ground state of this foliated space serves as a substitute for the non-existing $L^2$ ground states on the leaves of the foliation. Restricting the global ground state to a leaf gives a generalized eigenwave that can be plugged into the usual machinery to get the Q-process. 

\end{abstract}

\medskip
\noindent\textbf{Keywords:}
Brownian motion; Q-process; Poisson point process; random potential; hyperbolic space; Schr\"odinger operator; Doob transform; Benjamini--Schramm convergence; foliated Laplacian.

\section{Introduction}

Let $\bP^x$ denote the Wiener measure of Brownian motion $(X_t)_{t \in \dR_{\geq 0}}$ on a smooth Riemannian manifold $M$, starting at $x \in M$. Given a continuous potential $V \colon M \to \dR_{\geq 0}$, we consider the \emph{$V$-tilted path measure of time $T$} by setting 
\begin{equation}\label{eq:tilted}
  \frac{\dd \mathbf{Q}_T^x}{\dd \bP^x}
  := \frac{1}{Z_T^x}
  \exp\!\left(- \int_0^T V(X_s)\,\dd s\right),
\end{equation}
where $Z_T^x$ is the normalizing factor. 
This measure describes the trajectory of a particle in a soft trap model, conditioned to survival until time $T$. We are interested in the existence and asymptotic behavior of the path limit (that is, the weak limit on the space of trajectories) as $T \to \infty$. When this limit exists, it is called the \emph{Q-process with respect to $V$}. 

When $M$ is compact, it is classical that the above family of path measures admits a unique weak limit as $T \to \infty$. The limit is a diffusion process whose generator is the Doob transform associated with the (unique) ground state of the Schrödinger operator $-\tfrac12 \Delta_M + V$. 

In the non-compact case, the above operator typically does not admit a lowest eigenvalue, and there is no general guarantee for the existence of the Q-process. Our main result provides a general existence criterion for homogeneous random potentials on hyperbolic spaces $M = \mathbb{H}^d$.

\begin{theorem}\label{thm:thm1}
Let $\omega$ be a homogeneous Poisson Point Process (PPP) on $\Hd$ and let
$V_\omega:\Hd\to[0,\infty)$ be a continuous factor-of-$\omega$ potential with maximum norm 
\begin{equation}\label{eq:MainThmEq}
  \|V_\omega\|_\infty < \frac{(d-1)^2}{8}.  
\end{equation}
Then for a.e.\ $\omega$ and for all $x \in \Hd$, the tilted measures $\mathbf Q_T^x$ converge to a diffusion process that is independent of the starting point $x$.
\end{theorem}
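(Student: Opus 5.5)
The plan is to follow the strategy outlined in the abstract. We build a positive generalized eigenfunction of $H_\omega = -\tfrac12\Delta + V_\omega$ on each leaf from the ground state of the foliated operator. We then show that the tilted path measures converge to the Doob transform by this eigenfunction.

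\textbf{Step 1: the foliated space and its ground state.} Consider the space $\Omega$ of configurations with the Poisson measure, and the foliated (hull) space $\hOmega$ obtained by suspending $\Omega$ over $\Hd$ via the isometry group. Its leaves are copies of $\Hd$ carrying the potentials $V_\omega$. On $L^2$ of the invariant measure we take the leafwise operator $\mathcal H = -\tfrac12\Delta_{\mathcal F} + V$, which is self-adjoint. Let $\lambda_0 = \inf \spec \mathcal H$.

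The key spectral input is the gap
\[
\lambda_0 < \tfrac{(d-1)^2}{8} \le \inf\spec_{\mathrm{ess}}\text{-type thresholds on leaves}.
\]
For the upper bound, PPP potentials have arbitrarily large regions where $V_\omega$ is almost zero. Test functions supported on invariant sets of such configurations (equivalently, using ergodicity and the fact that the leaf bottom of spectrum is $(d-1)^2/8 \cdot 0$) give $\lambda_0 \le \|V\|_\infty$ or better. The honest statement we need is only $\lambda_0 < (d-1)^2/8$, which follows since $\lambda_0 \le \|V\|_\infty$ by testing with constants on $\hOmega$. The constant function is in $L^2$ because the invariant measure is finite on a fundamental-domain slice; this is exactly where compactness of the foliated space replaces compactness of $M$.

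We then need $\lambda_0$ to be an isolated eigenvalue with a positive, essentially unique eigenfunction $\phi$. To get this, we use the observation that leafwise the free hyperbolic Laplacian has spectrum $\ge (d-1)^2/8$ for $-\tfrac12\Delta$. This gives a Persson-type bound: the part of $\mathcal H$ ``at infinity'' in the transversal direction cannot go below $(d-1)^2/8$. Positivity improving of the foliated heat semigroup (via ergodicity of the PPP) then gives simplicity and $\phi>0$. This is the step I expect to be the main obstacle, since $\hOmega$ is not locally compact in the transversal direction and Rellich-type compactness is unavailable. I would first try to prove it by comparing $e^{-t\mathcal H}$ with the free hyperbolic heat kernel, which decays like $e^{-t(d-1)^2/8}$. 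The aim is to show that $e^{-t(\mathcal H-\lambda_0)}$ converges to the rank-one projection onto $\phi$, using the gap $(d-1)^2/8 - \|V\|_\infty>0$.

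\textbf{Step 2: restriction to leaves.} By leafwise elliptic regularity, for a.e.\ $\omega$ the restriction $\phi_\omega$ of $\phi$ to the leaf is a smooth positive solution of $H_\omega\phi_\omega = \lambda_0\phi_\omega$ on $\Hd$. This is the generalized eigenwave.

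\textbf{Step 3: convergence of $\mathbf Q_T^x$.} Fix $t$ and an $\mathcal F_t$-measurable bounded functional $F$. By the Markov property,
\[
\mathbf Q^x_T[F] = \frac{\bE^x\!\left[F\, e^{-\int_0^t V}\, u_{T-t}(X_t)\right]}{u_T(x)}, \qquad u_s(y)=\bE^y e^{-\int_0^s V(X_r)\,\dd r}.
\]
The remaining claim is the ratio limit $e^{\lambda_0 s}u_s(y) \to c_\omega\phi_\omega(y)$ locally uniformly in $y$, with $c_\omega>0$. To prove it, write $u_s$ as the leafwise semigroup applied to $1$ and use the spectral gap from Step 1. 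This gives $e^{\lambda_0 s}e^{-s\mathcal H}1\to\langle 1,\phi\rangle\phi$ in $L^2(\hOmega)$ at an exponential rate. Upgrading to a.e.\ leafwise, locally uniform convergence uses Harnack inequalities (uniform since $V$ is bounded) together with Borel--Cantelli along integer times, interpolated by the semigroup.

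Once the ratio limit holds, $\mathbf Q_T^x[F] \to \bE^x[F e^{\lambda_0 t - \int_0^t V}\phi_\omega(X_t)]/\phi_\omega(x)$. This is the law of the Doob $h$-transform with $h=\phi_\omega$, a diffusion with generator $\tfrac12\Delta + \nabla\log\phi_\omega\cdot\nabla$. The generator does not depend on $x$. Conservativeness of this transformed diffusion follows from the martingale property, which the same ratio limit gives. Tightness on path space is automatic from convergence of finite-dimensional laws for each fixed $t$, since $F$ is arbitrary.
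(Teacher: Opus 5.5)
There is a genuine gap, and it sits at the step you yourself flag as the main obstacle. Everything downstream depends on one fact: the free foliated operator $-\tfrac12\Delta$ has spectrum contained in $[(d-1)^2/8,\infty)$ on the orthogonal complement of the constants in $L^2(\Omega,\mu)$. That fact is what gives isolation and simplicity of $\lambda_0$, the exponential convergence $e^{\lambda_0 s}e^{-s\mathcal H}\mathbf 1\to\langle\mathbf 1,\phi\rangle\phi$ in $L^2$, and hence Steps 2 and 3. None of your proposed ingredients can supply it.

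\begin{itemize}
\item The bound $\spec(-\tfrac12\Delta_{\Hd})\subset[(d-1)^2/8,\infty)$ and the $e^{-t(d-1)^2/8}$ decay of the hyperbolic heat semigroup are statements about $L^2$ of a single leaf. A nonzero $F\in L^2(\Omega,\mu)$ restricts to almost every leaf as a stationary field that is not in $L^2(\Hd)$, so no leafwise $L^2$ estimate applies.
\item Writing $\langle F,e^{\frac t2\Delta_\Omega}F\rangle=\int_G p_t(g)\,\langle F,F\circ g^{-1}\rangle\,\dd g$, with $p_t$ the heat kernel on $G$, shows that the decay is governed by the correlations $\langle F,F\circ g^{-1}\rangle$. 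That is a property of the mixing of the transverse measure $\mu$, not of the leaf geometry.
\item Ergodicity only makes $0$ a simple eigenvalue, not an isolated one.
\item Finiteness of $\mu$ only puts $\mathbf 1$ in $L^2$. It does not play the role that compactness (Rellich) plays for $M$.
\end{itemize}

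The gap is not a leafwise property at all. Take a cocompact torsion-free lattice $\Gamma$ with $\lambda_1(-\tfrac12\Delta_{\Gamma\backslash\Hd})<(d-1)^2/8$; such lattices exist. The diagonal action of $G$ on $\Gamma\backslash G\times\Omega$ is free, ergodic and measure preserving, with leaves isometric to those of the Poisson foliation. Yet its foliated Laplacian has the eigenvalue $\lambda_1(-\tfrac12\Delta_{\Gamma\backslash\Hd})$ on $L^2(\Gamma\backslash\Hd)\otimes\mathbf 1$. So no Persson-type or heat-kernel comparison that uses only the leaves can work.

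The missing idea is the Ramanujan property of the Poisson foliation, Theorem~\ref{thm:PPPisRamanujan} in the paper. By the Last--Penrose Fock space isometry, $L^2(\Omega,\mu)\ominus\mathbb C\mathbf 1\cong\bigoplus_{n\ge1}L^2_{\mathrm{sym}}((\Hd)^n)$ $G$-equivariantly, and each summand is contained in a multiple of the regular representation. Hence on $K$-invariant vectors orthogonal to $\mathbf 1$, $-\tfrac12\Delta$ has spectrum in $[(d-1)^2/8,\infty)$. This is the specifically Poissonian input; without it the statement is empty for general stationary potentials.

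Once you have it, your Step 1 closes by min-max rather than a Persson bound:
\begin{itemize}
\item Testing with constants gives $\lambda_0\le\langle\mathbf 1,\mathcal H\mathbf 1\rangle=\int V\,\dd\mu\le\|V\|_\infty$.
\item $\mathcal H\ge-\tfrac12\Delta+\inf V$ forces the second min-max level of $\mathcal H$ to be at least $(d-1)^2/8$.
\end{itemize}
So $\lambda_0$ is a simple isolated eigenvalue with gap at least $(d-1)^2/8-\|V\|_\infty$. This treats nonnegative $V$ directly, whereas the paper uses a Born series and Riesz projection for $\|V\|_\infty<\tfrac12\cdot\tfrac{(d-1)^2}{8}$ and then shifts by the midrange of $V$. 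With the gap in hand, your Steps 2 and 3 follow essentially the paper's route: Borel--Cantelli along discrete times, interpolation between them, continuity or Harnack in $x$, and the Markov-property ratio formula.
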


We refer to Definition \ref{def:FactorOfPPP} for a precise formulation of a factor of PPP potential. A natural family of potentials that can be used above is 
$$V_\omega(x)=\min\left\{V_{\max}, \sum_{y\in \omega} \eta(d(x,y))\right\}$$
where $\eta \colon \dR_{\geq 0} \to \dR_{\geq 0}$ is a fixed seed function and $V_{\max}$ is a constant.  Note that the intensity of the Poisson point process is irrelevant here. 

Theorem \ref{thm:thm1} brings together two well-studied objects in a novel way. Previous results on Q-processes concentrate on confining potentials, or alternatively, periodic potentials where the existence of the path limit follows from the classical theory. On the other side, Brownian motion of time $T$ tilted by Poissonian (soft or hard) traps has been extensively studied in the literature, see e.g. \cite{sznitman2013brownian} and references therein, but we did not find a discussion on the existence of the path limit. These results also concentrate on Euclidean spaces, where our spectral method stops working automatically and one needs new ideas. 

We use the spectral theory of the foliated space defined by the point process to get Theorem \ref{thm:thm1}. Indeed, in situations like ours, individual instances of $(\Hd,V_\omega)$ do not admit $L^2$ ground states. However, it turns out that the global spectral theory of $L^2$ of the ambient foliated space does admit a unique ground state that (when restricted back to leaves) can be used as a substitute in finding the limiting Q-process. 

As we will see, Theorem \ref{thm:thm1} will also hold for an arbitrary stationary random potential $V$, but the bound on the maxnorm will depend on the global spectral gap of the potential. In particular, if the underlying group action does not have spectral gap, we get an empty statement (this is exactly the issue with Euclidean spaces as noted above). In Theorem \ref{thm:thm1} we only use Poisson point processes because the spectral gap of a factor of Poisson potential is maximal possible, that is, Poisson point processes are Ramanujan, see Theorem \ref{thm:PPPisRamanujan}. 

Our next result addresses how the Q-processes defined by a general stationary random potential can be approximated by Q-processes defined by \emph{periodic potentials}. 

\begin{theorem}\label{thm:thm2}
Let $(M_n)$ be a sequence of compact hyperbolic $d$-manifolds, with uniform spectral gap 
\[
\lambda := \liminf_{n\to\infty} \lambda_1\!\left(-\tfrac{1}{2}\Delta_{M_n}\right) > 0 
\]
such that $M_n$ Benjamini-Schramm converges to $\Hd$. Let $V_n$ be a non-negative continuous potential on $M_n$ with maximum norm \[
\limsup_{n\to\infty} \|V_n\|_{\infty} < \lambda. 
\]
such that the pair $(M_n,V_n)$ Benjamini-Schramm converges to $(\Hd,V)$ where $V$ is a stationary random function on $\Hd$. Then the spectral gap of the foliated space $(\Hd,V)$ is at least $\lambda$ and the $Q$-processes of $(M_n,V_n)$ converge to the $Q$-process of $(M,V)$.
\end{theorem}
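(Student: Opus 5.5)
The plan has two parts. The first is to pass the spectral gap through the Benjamini--Schramm limit. The second is to show that the normalized ground states of $-\tfrac12\Delta_{M_n}+V_n$ converge, locally uniformly and after rerooting, to the restriction of the global ground state of the foliated space $(\Hd,V)$ to a leaf. Convergence of the Doob-transformed generators, and hence of the Q-processes, should then follow from standard stability results for diffusions.

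\textbf{Step 1: the gap of the limit.} I will view each $(M_n,V_n)$ as a foliated space with a single leaf, carrying normalized Riemannian volume as its invariant measure. The point is that $L^2(M_n)$ is then the $L^2$ space of a unimodular random rooted space, namely $M_n$ rooted at a uniform random point.

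To show the gap of the limit foliated space $(\Hd,V)$ is at least $\lambda$, I will use the variational characterization. Take a test function $F$ on the space of rooted potentials $(\Hd,V,o)$ that is continuous and depends only on a bounded neighborhood of the root, with $\int F=0$. By Benjamini--Schramm convergence, $F$ can be transplanted to $M_n$ via $f_n(x)=F(M_n,V_n,x)$. This is well defined for $n$ large, since the injectivity radius goes to infinity at most points.

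Both Rayleigh quotients converge, and the mean of $f_n$ tends to $0$. Since $f_n$ is almost orthogonal to constants, we get $\langle -\tfrac12\Delta f_n,f_n\rangle\ge(\lambda_1(M_n)-o(1))\|f_n\|^2$. A density argument for such local $F$ in the form domain of the foliated Laplacian then gives $\lambda_1(\Hd,V\text{ foliated})\ge\lambda$. Only the Laplacian part matters for the gap here, since the potential is bounded.

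\textbf{Step 2: ground states of the limit.} Because $\|V\|_\infty<\lambda$, the bottom of the spectrum of $-\tfrac12\Delta+V$ on the foliated space is at most $\int V<\lambda$, using the constant test function. Bounded perturbation theory then isolates a simple ground state $\phi>0$. The same argument applies on each $M_n$, giving ground states $\phi_n$ with eigenvalues $\mu_n$ and $\sup_n\mu_n<\liminf\lambda_1(M_n)$.

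\textbf{Step 3: convergence of the ground states.} I expect this to be the main obstacle. I will normalize $\|\phi_n\|_{L^2(M_n)}=1$ and consider the random rooted triples $(M_n,V_n,\phi_n,o)$. Elliptic Harnack inequalities, with constants depending only on $d$ and $\sup\|V_n\|_\infty$, give local $C^{1,\alpha}$ bounds on $\log\phi_n$. Hence these triples are tight in the local topology, and any subsequential limit is a unimodular random triple $(\Hd,V,\psi,o)$ solving $(-\tfrac12\Delta+V)\psi=\mu\psi$ with $\mu=\lim\mu_n$.

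To identify $\psi$ with $\phi$ and $\mu$ with the foliated ground energy, I will use the uniform gap: the spectral gap of $M_n$ together with $\|V_n\|_\infty$ bounds the spectral gap of the Schr\"odinger operator below, uniformly in $n$. Combining this with a second-moment bound (uniform integrability of $\phi_n^2$ at a random root, also from Harnack) ensures the limit is nonzero and $L^2$ on the foliated space. Uniqueness of the foliated ground state then identifies it with $\phi$ and $\mu$ with the bottom of the spectrum, and makes the whole sequence converge.

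\textbf{Step 4: convergence of the Q-processes.} On the leaf through the root, the Q-process generator is $\tfrac12\Delta+\nabla\log\phi\cdot\nabla$, and similarly with $\phi_n$ on $M_n$. Local $C^1$ convergence of the drifts $\nabla\log\phi_n\to\nabla\log\phi$ implies convergence in law on path space by standard martingale-problem stability. Non-explosion of the limit is guaranteed by the gap argument, since the paper's machinery gives a positive recurrent-type control via the generalized eigenwave.

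The delicate point in Step 3 is excluding loss of mass: the $\phi_n$ could in principle concentrate on vanishing regions of $M_n$. I would try to rule this out using the uniform gap through a localization estimate, $\mathrm{Var}(\phi_n)\le C/(\lambda-\|V_n\|_\infty)$.
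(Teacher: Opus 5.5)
Your argument can be made to work, but for the convergence part it takes a different route from the paper, and the step you flag as delicate needs a different justification from the one you give.

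\textbf{Comparison with the paper.} Your Step 1 is the paper's argument. Your Step 2 is cleaner than the paper's. Since $V\ge 0$, min--max gives $\inf\spec H\le\int V\le\|V\|_\infty<\lambda\le\mu_2(-\tfrac12\Delta)\le\mu_2(H)$. So the ground state is simple and isolated in the whole range, without the Born series and recentering used for Theorem~\ref{thm:thm1}.

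The paper never extracts subsequential limits. It notes that $\frac{1}{\vol M_n}\langle\mathbf 1,e^{-tH_n}\mathbf 1\rangle$ is the Benjamini--Schramm average of the partition function $Z_{n,t}$. Hence the spectral measures of $\mathbf 1$ converge via their Laplace transforms, and the isolated atoms give $\rho_n\to\rho$ together with their masses. The uniform gap then gives $\|\varphi_n-c_ne^{t\rho_n}Z_{n,t}\|\le e^{-\epsilon t}$ uniformly in $n$. So the ground state is, up to an error uniform in $n$, a local observable of the scenery. Convergence $\sqrt{\vol M_n}\,\varphi_n\to\varphi$ follows with the correct normalization and no compactness or identification step.

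Your route (Harnack compactness, then identification via uniqueness of a positive $L^2$ eigenfunction) directly gives the locally uniform control of $\nabla\log\phi_n$ that convergence of the drifts needs, which the paper leaves implicit. The price is that you must prove non-degeneracy and identification of the limit yourself.

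\textbf{Non-degeneracy.} Harnack does not give uniform integrability of $\phi_n(o)^2$ at a uniform root. It only bounds $|\log\phi_n(x)-\log\phi_n(y)|$ by $C(1+d(x,y))$, which says nothing about how the $L^2$-mass is spread over $M_n$. What you need from the gap is $\mathrm{Var}(\phi_n)\le\theta<1$ uniformly; a bound of the form $C/(\lambda-\|V_n\|_\infty)$ helps only if it is $<1$. Min--max gives the right bound. Use normalized volume, $\|\phi_n\|=1$ and $a_n=\langle\phi_n,\mathbf 1\rangle$. Then $\lambda_1(M_n)\|\phi_n-a_n\mathbf 1\|^2\le\langle\phi_n,H_n\phi_n\rangle=\mu_n\le\|V_n\|_\infty$, so $a_n^2\ge 1-\|V_n\|_\infty/\lambda_1(M_n)$ is bounded below.

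Since $\phi_n(o)$ is bounded in $L^2$, it is uniformly integrable in $L^1$. Hence the limit has $\bE[\psi(o)]=\lim a_n>0$, and Fatou gives $\psi(o)\in L^2$. This does not rule out losing part of the $L^2$-mass: the limit could be $c\phi$ with $c<1$ depending on the subsequence. That is harmless, because $\nabla\log\phi$ is scale invariant. Full uniform integrability would require the paper's decomposition $\phi_n=a_n^{-1}e^{t\mu_n}e^{-tH_n}\mathbf 1+O_{L^2}(a_n^{-1}e^{-\epsilon t})$, whose main term is bounded.

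\textbf{Identification.} A subsequential limit is a stationary pair $(V,\psi)$, and $\psi$ is not a priori a factor of $V$. So uniqueness must be applied on the foliated space of the pair. Run Step 1 for the decorated sequence $(M_n,V_n,\phi_n)$ to get a gap $\ge\lambda$ there. Then the nonnegative, nonzero eigenfunction $\Psi=\psi(o)$ is not orthogonal to the positive ground state, which is the pullback of $\phi$. This forces $\mu=\rho$ and $\Psi=c\phi$.

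\textbf{Step 4.} Non-explosion comes from the uniform gradient bound $|\nabla\log\phi|\le C(d,\|V\|_\infty)$, not from any recurrence. With $V$ merely continuous, your estimates give locally uniform (not $C^1$) convergence of the drifts. That is all martingale-problem stability needs.
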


We need to explain Benjamini-Schramm convergence of $(M_n,V_n)$ to $(M,V)$ which in the above case is quite simple (for a general definition see \cite{abert2018eigenfunctions}). Here, we lift the function $V_n$ to $\Hd$ using a random root and frame, which gives us a periodic, stationary random function $\widetilde V_n$ on $\Hd$. Benjamini-Schramm convergence means weak convergence of the law of the function $V_n'$ to the law of $V$. We will clarify the exact space and setup in Section \ref{sec:section5}. 

It is not clear, however, which stationary random potentials can be approximated by periodic ones with a good spectral gap. This is the point where factor of Poisson Point Process potentials shine against an arbitrary stationary random potential. Indeed, any factor of PPP potential can be approximated by periodic potentials which allows us to prove a general stability result there. 

\begin{theorem}\label{thm:thm3}
There exists $\lambda_d > 0 \; (d \geq 2)$ such that the following hold. Let $\omega$ be a homogeneous Poisson Point Process (PPP) on $\Hd$ and let
$V_\omega:\Hd\to[0,\infty)$ be a continuous factor of $\omega$ potential with maximum norm 
\begin{equation}
  \|V_\omega\|_\infty < \lambda_d. 
\end{equation}
Then the Q-process of $(\Hd,V)$ can be approximated by periodic Q-processes, that is, Q-processes defined by periodic potentials. 
\end{theorem}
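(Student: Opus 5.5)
Theorem \ref{thm:thm3} will follow from Theorem \ref{thm:thm2} once we build compact hyperbolic manifolds $M_n$ with a \emph{uniform} spectral gap and periodic potentials $V_n$ on them such that $(M_n,V_n)$ Benjamini--Schramm converges to $(\Hd,V_\omega)$. We then take $\lambda_d$ to be a uniform lower bound on $\lambda_1(-\tfrac12\Delta_{M_n})$ valid for the whole family. The plan has three steps.

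\textbf{Step 1 (manifolds with uniform gap).} We fix a cocompact arithmetic lattice $\Gamma<\mathrm{Isom}(\Hd)$ and let $\Gamma_n$ be a sequence of congruence subgroups whose injectivity radius tends to infinity. Then $M_n=\Gamma_n\backslash\Hd$ Benjamini--Schramm converges to $\Hd$. For congruence covers, property $(\tau)$ (Burger--Sarnak, Clozel) gives a uniform lower bound $\lambda_d>0$ on $\lambda_1(-\tfrac12\Delta_{M_n})$. In dimensions $d\ge 3$ one could alternatively use property (T)-type arguments for $\mathrm{SO}(d,1)$ lattices, but congruence/$(\tau)$ is uniform in all $d\ge2$, so we use it.

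\textbf{Step 2 (periodic potentials approximating the factor).} The potential $V_\omega=F(\omega)$ is a factor of the PPP. By Definition \ref{def:FactorOfPPP}, $F$ is equivariant and measurable, and we would first reduce to the case where $F$ is approximable by \emph{local} factors: $V_\omega(x)$ depends, up to a small error in probability, only on $\omega\cap B(x,R)$. Such local approximation is standard for measurable factors, and continuity of $V_\omega$ helps here. On $M_n$ we place a Poisson process $\omega_n$ of the same intensity and set $V_n(x)=F_R(\tilde\omega_n)(\tilde x)$, computed in the universal cover on the lifted periodic configuration $\tilde\omega_n$. Because $\mathrm{inj}(M_n)\to\infty$, the ball of radius $R$ around a uniform random point of $M_n$ is isometric to a hyperbolic ball, and there $\omega_n$ restricts to a Poisson process. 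Hence the lifted random function $\widetilde V_n$ converges in law to $F_R(\omega)$, and letting $R\to\infty$ along a diagonal sequence gives convergence in law to $V_\omega$. This is Benjamini--Schramm convergence in the sense of Section \ref{sec:section5}. Truncating at $\min\{V_n,\|V_\omega\|_\infty\}$ preserves the bound $\limsup\|V_n\|_\infty\le\|V_\omega\|_\infty<\lambda_d$. The choice of $\omega_n$ is random, but we may fix any realization, since a typical realization gives the convergence by a law of large numbers over the growing volume of $M_n$.

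\textbf{Step 3 (apply Theorem \ref{thm:thm2}).} With the hypotheses of Theorem \ref{thm:thm2} verified, the Q-processes of $(M_n,V_n)$ converge to the Q-process of $(\Hd,V_\omega)$. Lifted to $\Hd$, these are exactly Q-processes of periodic potentials.

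The main obstacle is Step 2, specifically the topology in which Theorem \ref{thm:thm2} requires convergence of $\widetilde V_n$. If this is local uniform convergence in law, we need equicontinuity of the $F_R$-approximations. We would handle it by mollifying, replacing $F_R$ by its average over a small ball, which is continuous in $x$ uniformly in the configuration. Since $V_\omega$ is continuous, the mollification error vanishes. The remaining technical point is the concentration estimate that lets us fix a deterministic realization of $\omega_n$. For this we would use that the empirical distribution of $R$-neighbourhoods over a nearly-independent family of well-separated points in $M_n$ concentrates, via a second-moment bound.
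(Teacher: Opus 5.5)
Your proposal is correct and follows the paper's proof: Clozel's uniform spectral gap for congruence covers of a cocompact arithmetic lattice, then a Poisson process on $M_n$ with a finite-range approximation of the factor copied periodically, then Theorem~\ref{thm:thm2} and lifting to $\Hd$. Your mollification, sup-norm control and concentration argument for fixing a deterministic realization of $\omega_n$ fill in points the paper passes over silently, but one aside is wrong, though you never use it: lattices in $\mathrm{SO}(d,1)$ never have property (T), since the group has the Haagerup property, so the uniform gap really must come from $(\tau)$ for congruence subgroups.
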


The above theorem has two parts. First, we use that for every $d \geq 2$ there exists an expander family of compact hyperbolic $d$-manifolds that Benjamini-Schramm converges to $\Hd$. This follows from two results: 1) By a well-known result of Clozel \cite{clozel}, for any symmetric space $X$ for a semisimple Lie group (including $\Hd$), there exists a constant $\epsilon_d > 0$, depending only on $X$, such that for every congruence lattice $\Gamma$ in $G$, the spectral gap of the orbifold $\Gamma \backslash X$ is at least $\epsilon_d$. 2) Fixing any co-compact arithmetic lattice $\Gamma$ on $\Hd$, by \cite{7sam}, we have that \emph{any} sequence of congruence subgroups of $\Gamma$, $\Gamma \backslash \Hd$ Benjamini-Schramm converges to $\Hd$. We set $\lambda_d$ to be the best asymptotic spectral gap for which this can be achieved. With the exception of $d=2$, the value of $\lambda_d$ is not known: for $d=2$, by the recent work of Hide and Magee \cite{hide2023near}, we have $\lambda_2 = 1/8$ which is the optimum. 

Second, we use that for \emph{any} such sequence of manifolds, the PPP of the compact manifolds will converge to the PPP of $\Hd$, and this extends to any factor potential. 
Together these yield a sequence of $(M_n,V_n)$ that satisfies the assumptions of Theorem \ref{thm:thm2}. 

A limiting case of the above model is the so-called \emph{hard obstacle model},
where the particle is conditioned to avoid an $r$-neighborhood of the PPP
$$\{ x \colon d(x,\omega) < r \} = \bigcup_{y\in \omega} B(y,r).$$
If the radius is sufficiently small, the complement of the $r$-neighborhood contains an infinite cluster with positive probability \cite{benjamini2001percolation,tykesson2007number}, raising the following question.

\begin{question}
On an infinite connected component of the supercritical hard Poisson obstacle model,
does the limiting $Q$-process exist?
\end{question}

Formally, the hard obstacle corresponds to the limit $V_{\max}\to\infty$ in the soft potential $ V_\omega(x) = V_{\max}\,\mathbf{1}_{\{ d(x,\omega) < r \}}$. At the moment, the method we developed here heavily use the bounded assumption of the potential and we are not able to answer this question.

\bigskip  

\noindent \textbf{Acknowledgements.} The authors thank Francois Ledrappier and Jean Raimbault for helpful discussions. 

\section{Preliminaries}
\paragraph{Hyperbolic space.}
We consider the hyperboloid model:
\[
\Hd
=
\left\{
z=(z_0,z_1,\ldots,z_d)\in \dR^{1+d}
\;\middle|\; z_0>0, \;
\langle z,z\rangle_J=-1
\right\},
\]
where $J=\mathrm{diag}(-1,1,\ldots,1)$ and $\langle\cdot,\cdot\rangle_J$ denotes the associated Minkowski bilinear form. Additional structure will be defined relative to a positively oriented orthonormal frame $(o,u)\in\FHd$. We fix the canonical choice $o=e_0\in\Hd$ and $u=(e_1,\ldots,e_d)\in (T_o \Hd )^d$, where $(e_0,\ldots,e_d)$ denotes the standard basis of
$\dR^{1+d}$. In this model, the group of orientation-preserving isometries of $\Hd$
coincides with the identity component of the group of real linear
transformations preserving the form, that is, $A^TJA = J$. We denote it by
$$
G=\mathrm{Isom}^{+}(\Hd)=\mathrm{PSO}(1,d)\subset GL_{d+1}(\dR). 
$$
 Any isometry $g\in G$ acts naturaly on the bundle of (positively-oriented) orthonormal frames by
\[
g\cdot(o,u) := (g \cdot o,\, \dd g_o(u)).
\]
Since this action is simply transitive, we obtain the identification $G \simeq \FHd$. We denote by $K$ the stabilizer of the origin $o$, so that
\[
\Hd \simeq G/K.
\]
We refers to \cite{franchi2012hyperbolic} for more on the hyperboloid model.

\paragraph{Brownian motion among soft traps.}
Let $\bP^x$ denote the Wiener measure on $C(\dR_{\geq 0},\Hd)$ of the Brownian motion
$(X_t)_{t\ge0}$ with $X_0=x$ and generator $\tfrac12\Delta_\Hd$, and let $\bE^x$ be the associated expectation. For $f\in C_c^\infty(\Hd)$,
\[
\bE^x[f(X_t)]
=
\bigl(e^{\frac t2\Delta_\Hd}f\bigr)(x).
\]
Given a bounded continuous potential $V:\Hd\to\dR$, we define the tilted
path measure $\mathbf Q_T^{x}$ by
\[ \frac{\dd \mathbf Q_T^{x}}{\dd \bP^{x}}
=
\frac{1}{Z_T^{x}}
\exp\!\left(- \int_0^T V(X_s)\,\dd s \right),
\]
\begin{equation}\label{eq:Ztx}
Z_T^{x}
= \bE^x\!\left[
\exp\!\left(- \int_0^T V(X_s)\,\dd s \right)
\right].
\end{equation}

Assume that that $(\mathbf Q_T^{x})_{T>0}$ converges weakly on $C(\dR_{\geq 0},\Hd)$ endowed with the topology of uniform convergence on compact time intervals. More precisely, we assume that for every $t\ge0$ and every Borel set $B \in \mathcal{F}_t$ (the canonical filtration),
\[
\mathbf Q^{x}(B)
:=
\lim_{T\to\infty}\mathbf Q_T^{x}(B)
\]
exists. These finite-dimensional marginals define a unique Markov probability
measure $\mathbf Q^{x}$, called the \emph{$Q$-process} associated with the killing potential $V$.

\paragraph{The periodic case.}
We say that $V$ is \emph{periodic} if it is invariant under a torsion-free
cocompact lattice $\Gamma\le G$, so that
$M=\Gamma \backslash \Hd$ is a closed hyperbolic manifold. The Schr\"odinger operator
\[
H=-\tfrac12\Delta_M+V
\]
defines a self-adjoint operator on $L^2(M,\operatorname{vol}_M)$ with
discrete spectrum. Its lowest eigenvalue $\rho$ is simple
\cite{Shigekawa1987}, with a positive $\varphi$ (normalized) eigenvector. Let $T_\varphi$ be the diagonal operator acting by multiplication by $\varphi$. The conjugated operator (see \cite[Section 1.15.8]{bakry2014analysis})
\begin{equation}\label{eq:doobtransform}
L = T_\varphi^*(\rho-H)T_\varphi
=
\tfrac12\Delta_M + \nabla\log\varphi\cdot\nabla    
\end{equation}
generates a diffusion process whose transition density $p_L(t,x,y) = e^{tL}_{xy}$ with respect to
$\operatorname{vol}_M$ is
\[
p_L(t,x,y)
=p_{\tfrac{1}{2}\Delta_M}(t,x,y)
e^{t\rho}
\bE^x\!\left[
\left.\exp\!\left(-\int_0^t V(X_s)\,\dd s\right)\right| X_t=y
\right] \frac{\varphi(y)}{\varphi(x)},
\]
where $\bE^x$ denotes expectation with respect to Brownian motion started from $X_0 = x\in M$. We refers to \cite{Shigekawa1987,Polymerakis2019,BallamannPolymerakis}; and \cite{Sullivan1987,Brooks1985} for renormalization by positive harmonic functions. Lifting this diffusion to $\Hd$ yields the existence of the $Q$-process in the periodic setting.

\paragraph{Casimir operator.}
In the hyperboloid model, the Lie algebra $so(1,d)\simeq T_{(o,u)}\FHd$ is generated by the orthonormal basis
$$
E_k := e_0 e_k^{*}-e_k e_0^{*}, \qquad
E_{i,j} := e_i e_j^{*}-e_j e_i^{*},
\quad k,i,j\in\{1,\ldots,d\},\ i<j. 
$$
For $A\in so(1,d)$ and $F\in C^\infty(G)$, we recall the (right) Lie derivative
\[
\mathcal{L}_A F(g)
:=
\left.\frac{\dd}{\dd t}\right|_{t=0} F\bigl(g\exp(tA)\bigr).
\]
The \emph{Casimir operator} $\Delta_G$ on $G$ is the bi-invariant second-order
differential operator
\[
\Delta_G
=
\sum_{k=1}^{d} \mathcal{L}_{E_k}^2
-
\sum_{1\le i<j\le d} \mathcal{L}_{E_{i,j}}^2 .
\]
It is essentially self-adjoint on $C_c^\infty(G)$ with respect to the Haar measure on $G$, has purely continuous spectrum $\spec(-\Delta_G) = [\tfrac{(d-1)^2}{4} ,\infty)$, see \cite[ch. VIII]{knapp2001representation}. The Casimir operator restricts on right $K$-invariant functions to the usual Laplace--Beltrami operator on the hyperbolic space. For $F \in C^\infty(G)$ of the form $F=f\circ\pi$, with $f\in C^\infty(\Hd)$ and $\pi:G\to G/K$ is the quotient map, one has
\begin{equation}\label{eq:CasimirToLB}
(\Delta_G F)(g)= (\Delta_\Hd f)(gK).
\end{equation}
Let $(g_t)_{t\ge0}$ denote the Brownian motion started at $g_0 =e_G$ the identity, generated by $\tfrac12\Delta_G$. Then $(g_t\cdot o)_{t\ge0}$ has the same law as $(X_t)_{t\ge0}$ with $X_0 = o$.

\subsection{The space of configurations}

We denote by
\begin{equation}\label{eq:conf}
\Omega
=
\left\{
\omega\subset\Hd \;\middle|\; \omega \text{ is locally finite}
\right\}    
\end{equation}
the configuration space, endowed with the topology generated by the counting
maps
\[
\omega \longmapsto |\omega\cap C|,
\qquad C\subset\Hd \text{ compact}.
\]
The law of the \emph{Poisson point process} on $\Hd$ with intensity
$\vol_\Hd$ is denoted by $\mu$. It is the unique Borel probability measure on $\Omega$ such that:
\begin{itemize}
    \item for every compact set $C\subset\Hd$,
    the random variable $|\omega\cap C|$ has distribution
    $\operatorname{Poiss}(\vol_\Hd(C))$;
    \item for any pair of disjoint compact sets $C_1,C_2\subset\Hd$,
    the random variables $|\omega\cap C_1|$ and $|\omega\cap C_2|$
    are independent.
\end{itemize}
Since $G$ preserves the volume, its action on $\Omega$ given by
\[
g\cdot \omega := \{g x \mid x \in \omega\}
\]
preserves the measure $\mu$.

\begin{definition}\label{def:FactorOfPPP}
A \emph{factor of the Poisson point process} is a random potential
$V_\omega:\Hd\to\mathbb R$ of the form
\[
V_\omega(x) = V_\Omega(g^{-1}\cdot \omega),
\qquad g \in G \text{ such that } g\cdot o = x,
\]
where $V_\Omega:\Omega\to\mathbb R$ is a bounded Borel function which is
$K$-invariant, i.e.\ $V_\Omega(k \cdot \omega)=V_\Omega(\omega)$ for all $k\in K$.
\end{definition}

\begin{example} The distance from $x$ to the PPP
\[
d(x,\omega) := \min_{y\in\omega} d(x,y).
\]
is clearly a $K$-invariant function on $\Omega$. Given any smooth decreasing function $\eta:\dR_{\ge0}\to\dR$ with, the potential
\[
V_\omega(x) = \eta(d(x,\omega))
\]
is a factor of the Poisson point process satisfying the assumptions of Theorem~\ref{thm:thm1} if $\eta(0)<(d-1)^2/8$.
\end{example}

Our main approach is to study square-integrable factors of the Poisson point
process $L^2(\Omega,\mu)$, endowed with the foliated structure described below.

The configuration space $\Omega$ carries a natural foliated structure induced by the action of the Lie group. For $\omega\in\Omega$, the $G$-orbit
\[
\lL_\omega := \{ g\cdot \omega \mid g\in G \} \subset \Omega
\] 
defines the leaf through $\omega$.
\begin{remark}\label{rmk:GActfreelyOnPPP} Consider the Borel set of configuration with trivial stabilizer
$$\Omega_0 =  \{ \omega \mid  \operatorname{Stab}_G(\omega)=\{e_G\}\}.$$
Since $\Hd$ has infinite volume, one has $\mu(\Omega_0)=1$; see Section~4.2 of \cite{AbertBiringer2022}. When $\omega \in \Omega_0$, the leaf $\lL_\omega$ is canonically diffeomorphic to $G$. In the sequel, when referring to a leaf, we implicitly assume $\omega \in \Omega_0$.
\end{remark}

We refer to \cite{moore2006global,candel2000foliations} for general background on foliated spaces. Considering the base frame, the triple $(\Hd, (o,u), \omega)$ corresponds to the setting of random manifold decorated by a PPP as considered in the \textit{desingularization Theorems} in \cite{abert2025uniform}. 

The foliation can be interpreted as observing the same configuration from
different reference frames. This leads to a natural correspondence between functions on the foliated space $\Omega$ and random functions on $G$.

Given a measurable function $F\colon\Omega\to\mathbb R$, we associate a random function on $G$ with respect to the probability space $(\Omega,\mu)$:
\begin{equation}\label{eq:FoliationToRandomWave}
F_\omega(g) := F(g^{-1}\cdot\omega),
\qquad g\in G,
\end{equation}
Thus, $F_\omega(g)$ represents the value of $F$ evaluated on the configuration
$\omega$ as seen from the frame $g\cdot(o,u)$. If $F$ is $K$-invariant, since $\Hd \simeq G/K$ we obtain a random function $F_\omega \colon 
\Hd \to \dR$.

\section{Foliated Brownian motion.}

The \emph{foliated Brownian motion},  as introduced by Garnett \cite{Garnett1983}, is the Markov process whose trajectories evolve along the leaves of the foliation according to Brownian
motion. This define a contraction semi-group on $L^2(\Omega,\mu)$
\[
\bigl(e^{\frac{t}{2}\Delta_\Omega}F\bigr)(\omega)
=
\bE^{e_G}\!\bigl[F(g_t^{-1}\cdot\omega)\bigr],
\]
where $(g_t)_{t\ge0}$ denotes Brownian motion on $G$ started from the identity
$e_G$. The generator $\Delta_\Omega$ is called the \emph{foliated Laplacian} and verifies on function that are leaf-wise $C^2$ the identity 
\[
(\Delta_\Omega F)(\omega)
:=
(\Delta_G F_\omega)(e_G),
\]
where $F_\omega(g)=F(g^{-1}\cdot\omega)$. The fact that the operator $\Delta_\Omega$ is symmetric  when $\Omega$ is endowed with the Poisson measure $\mu$  follows from the general theory of measured foliation, since this is a completely invariant transverse measure for the foliation. Equivalently, the random decorated manifold with its canonical frame is unimodular in the sense of \cite[Theorem~3.1]{abert2025uniform}. As the generator of a symmetric contraction semi-group, the operator $\Delta_\Omega$ is self-adjoint. In fact, much more can be said using representation theory.

\begin{theorem}[The PPP is Ramanujan]\label{thm:PPPisRamanujan} 
The $G$-action on the $L^2(\Omega,\mu)$, the foliation of the PPP, decomposes as a direct sum of the trivial representation and a countable sum of copies of the regular representation. As a consequence, the foliated Laplacian has spectrum 
\begin{equation}\label{eq:spectralgap}
  \spec(-\Delta_\Omega)= \{0\} \cup \spec(-\Delta_G) = \{0\} \cup \left[\frac{(d-1)^2}{4}, \infty\right).  
\end{equation}
The bottom eigenvalue is simple and associated with the constant function $\mathbf 1_{\Omega}$.
\end{theorem}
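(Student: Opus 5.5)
The plan is to use the Fock space (Wiener–Itô chaos) decomposition of $L^2(\Omega,\mu)$ and identify each chaos as a representation of $G$. First, the Poisson space has the isometric decomposition
\[
L^2(\Omega,\mu)\cong \bigoplus_{n\ge 0} L^2_{\mathrm{sym}}\bigl((\Hd)^n,\vol^{\otimes n}\bigr)
\]
via multiple Wiener–Itô integrals (equivalently, Charlier polynomials / the exponential vectors $\omega\mapsto \prod_{x\in\omega}(1+f(x))e^{-\int f}$). Since $G$ preserves $\vol_\Hd$, this isomorphism is $G$-equivariant: the Koopman action $F\mapsto F(g^{-1}\cdot)$ corresponds to the diagonal action on $(\Hd)^n$. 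The $n=0$ summand is the constants, i.e. the trivial representation. So it remains to show that for $n\ge1$ the quasi-regular representation of $G$ on $L^2((\Hd)^n)$ (and on its symmetric part) is a multiple of the regular representation $L^2(G)$.

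For $n\ge1$, I would consider the open conull subset of $(\Hd)^n$ on which $G$ acts freely. For $n\ge d+1$, generic tuples have trivial stabilizer, and the action is proper, so a measurable section gives a $G$-equivariant measure isomorphism $(\Hd)^n\simeq G\times Y$ with $G$ acting on the first factor and Haar measure times some measure on $Y$. Then $L^2\cong L^2(G)\otimes L^2(Y)$, which is a countable multiple of the regular representation. For small $n$ the stabilizers are compact but nontrivial; for instance, for $n=1$ we get $L^2(G/K)$. This is the main obstacle. Here I would not claim an exact regular decomposition. Instead I would use the weaker fact that suffices for the spectral statement: $L^2(G/H)$ with $H$ compact is a subrepresentation of $L^2(G)$ (right $H$-invariant functions), so it is weakly contained in the regular representation. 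If the statement as written ("copies of the regular") is intended literally, I would instead absorb the symmetric-tensor and stabilizer issues by noting that $L^2(G/H)\otimes L^2(G)\cong \infty\cdot L^2(G)$ (Fell absorption), and rewrite the higher chaoses using it. In any case, every nontrivial chaos is contained in or weakly equivalent to a multiple of $\lambda_G$, and the symmetrization (a finite-group quotient commuting with $G$) preserves this.

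For the spectral consequence, $-\Delta_\Omega$ is the image of the Casimir element under the Koopman representation, and the Casimir acts diagonally on the decomposition. On constants it is $0$. On any subrepresentation of a multiple of $L^2(G)$, its spectrum is contained in $\spec(-\Delta_G)=[\tfrac{(d-1)^2}{4},\infty)$, since the spectrum of $\pi(\Omega_{\mathrm{Cas}})$ depends only on the weak equivalence class of $\pi$, via Fell continuity of the spectrum of images of the Casimir. Equality holds because the first chaos $L^2(\Hd)$ already realizes all of $[\tfrac{(d-1)^2}{4},\infty)$: by \eqref{eq:CasimirToLB} this is $\spec(-\Delta_\Hd)$. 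Finally, simplicity of $0$ follows from the gap: the kernel of $\Delta_\Omega$ lies in the zeroth chaos, so it consists of constants, which is also ergodicity of the PPP.
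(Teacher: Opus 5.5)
Your proposal is essentially the paper's proof: the Last--Penrose chaos decomposition, the diagonal $G$-action on each chaos, and transfer of the spectrum from $L^2(G)$. It is more careful than the paper at one point. The paper asserts that every chaos $n\ge 1$ is a copy of the regular representation, but the first chaos is only the quasi-regular $L^2(G/K)\subset L^2(G)$. Your subrepresentation argument, together with using the first chaos $L^2(\Hd)$ for the reverse inclusion, is the right repair. The literal ``trivial plus countably many copies of $L^2(G)$'' then holds only globally, not chaos by chaos and not via Fell absorption: it follows from the swindle $A\oplus\infty\cdot L^2(G)\cong\infty\cdot L^2(G)$ for any subrepresentation $A$ of $\infty\cdot L^2(G)$.

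There is one caveat, which you inherit from the paper's preliminaries rather than introduce. For the Casimir as defined, $\spec(-\Delta_G)=[\frac{(d-1)^2}{4},\infty)$ holds on right-$K$-invariant functions but not on all of $L^2(G)$. Non-spherical tempered representations contribute spectrum below $\frac{(d-1)^2}{4}$; for $d=2$ the discrete series of $PSL_2(\dR)$ even give eigenvalues $\le 0$. So your containment step, like the paper's, is valid only on $L^2(\Omega,\mu)^K$, where only spherical representations occur. That $K$-invariant part is all the proof of Theorem~\ref{thm:thm1} actually uses.
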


\begin{remark}
This is the continuous analogue of
\cite[Theorem~2.1 and Corollary~2.2]{lyons2011perfect} for the Bernoulli graphing of a discrete group.
\end{remark}

\begin{proof} 
We use the Fock representation of the Poisson process as constructed by G.Last and M.Penrose in \cite{LastPenrose2011}, see also \cite{Albeverio1998} for more on this approach. Consider the symmetric Fock space
\[
\Gamma_s(\Hd)
:=
\mathbb C \oplus \bigoplus_{n=1}^\infty \mathcal H^{\odot n},
\]
where $\mathcal{H} := L^2(\mathbb H^d,\mathrm{vol}_{\mathbb H^d})$ and $\mathcal{H}^{\odot n}$ denote the symmetric function in $n$ variables in $\Hd$, equipped with the norm
\[
\|f\|_{\Gamma_s(\Hd)}^2
=
|f_0|^2
+
\sum_{n=1}^\infty
n!\,\|f_n\|_{L^2((\mathbb H^d)^n)}^2.
\]
Given $F\in L^2(\Omega,\mu)$, we define the map $U : F \mapsto f=(f_n)_{n\ge0}$, given by \cite{LastPenrose2011}
\[
f_0 := \int_\Omega F(\omega) \mu(\dd \omega),
\qquad
f_n(x_1,\dots,x_n)
:=
\sum_{J\subset \{1, \ldots n\}} (-1)^{n-|J|}\int_\Omega F(\omega \, \cup \, \{x_j \colon j \in J\} ) \mu(\dd \omega),
\]
The main result in \cite{LastPenrose2011} is that $(f_n)_{n\ge0}$ belongs to $\Gamma_s(\Hd)$ and $U$ is in fact an isometry
$$
\| F \|_{L^2(\Omega,\mu)} 
=
\|f\|_{\Gamma_s(\Hd)}= \sqrt{
\sum_{n=0}^\infty
n!\,\|f_n\|_{L^2((\mathbb H^d)^n)}^2}.
$$
So far, we have only used the measure space structure of $\Hd$. We now observe that the $G$-action acts diagonally on each summand of $\Gamma_s(\Hd)$ via the unitary transform $U$. The constant function $1_\Omega$ is sent to the first component $f_0=1$ and corresponds to the trivial representation. For $f_n, n\geq 1$, we see that $F(g\cdot \omega)$ translate to $f_n(g\cdot x_1, \ldots, g \cdot x_n)$, so that each factor $n\geq1$ corresponds to the regular representation. Since the foliated Laplacian is defined by the action through the Brownian motion on $G$, 
$$e^{\frac{t}{2}\Delta_\Omega}F(\omega) = \bE^{e_G} [F(g_t\cdot \omega)],$$
we obtain that $\spec(-\Delta_\Omega)= \{0\} \cup \bigcup_{n \geq 1} \spec(-\Delta_G)$.
\end{proof}

\section{Proof of Theorem~\ref{thm:thm1}}

In this section, we use the membrane method introduced in
\cite{abert2025uniform} to prove the existence of the limiting $Q$-process
claimed in Theorem~\ref{thm:thm1}.
Since the potential is a factor of a Poisson point process,
its law is invariant under the action of $G$.
As a consequence, the random Schr\"odinger operator
$H_\omega = -\tfrac{1}{2}\Delta_\Hd + V_\omega$ does not admit a ground state
$\varphi_\omega\in L^2(\Hd)$.
This phenomenon already occurs for the free Laplacian, whose constant
eigenfunction $\mathbf 1_\Hd$ belongs to $L^\infty(\Hd)$ but not to
$L^2(\Hd)$.

By contrast, on the foliated space associated with the Poisson point process, the constant function $\mathbf 1_{\Omega}$ is a genuine $L^2(\Omega,\mu)$-eigenvector of the foliated Laplacian $\Delta_\Omega$. Restricted to a leaf $\lL_\omega$, it reduces to the constant function $\mathbf 1_\Hd$ on $\Hd$ (seen as a $K$-invariant constant function on $G$).

Spectral perturbation theory implies that, under a sufficiently small bounded potential, the corresponding foliated Schr\"odinger operator still admits a
positive ground state $\varphi\in L^2(\Omega,\mu)$.
Via the correspondence of equation \eqref{eq:FoliationToRandomWave}, this yields a
random positive generalized eigenfunction $\varphi_\omega$ on $\Hd$ for, which can be used as the Doob $h$-transform to construct the quenched $Q$-process.

\begin{proof}[Proof of Theorem \ref{thm:thm1}]
Consider a random potential $V_\omega(\cdot)$, as factor of Poisson. Following definition \ref{def:FactorOfPPP}, we obtain a $K$-invariant potential function $V_\Omega \in L^{\infty}(\Omega)$, and denote $V$ associated multiplication operator $V \colon F\mapsto V_\Omega\cdot F$. As bounded perturbation of self-adjoint operator, the foliated Schr\"odinger operator 
\begin{equation}\label{ref:FoliatedSCH}
    H = -\tfrac{1}{2}\Delta_\Omega + V  
\end{equation}
is self-adjoint. It acts on a leaf $L^2(\mathcal{L}_\omega) \simeq L^2(G)$, by  $H_G = -\tfrac{1}{2}\Delta_G + V_\omega$. We use classic perturbation theory \cite[chapter VII]{reed1978iv} to show that $H$ admits a lower eigenvector. 

We assume first that $\| V \| < \frac{1}{2}\frac{(d-1)^2}{8}$, allowing the potential to take negative values. From the spectral estimate of equation \ref{eq:spectralgap} and for $z$ in the contour $\gamma=\{z \colon |z|=\frac{1}{2}\frac{(d-1)^2}{8}\}$, the Born series 
\begin{equation}\label{eq:bornserie}
(H-z)^{-1} = (\tfrac{-1}{2}\Delta_\Omega-z)^{-1}\sum_k (-V(\tfrac{-1}{2}\Delta_\Omega-z)^{-1})^k  
\end{equation}
is absolutely convergent, showing that the spectrum is separated away from the range of $\gamma$. The series is analytic in the perturbation $V$, and so does the orthogonal projection
$$P_V = \frac{1}{2\pi i} \int_{\gamma} (H-z)^{-1} \dd z.$$
For $V=0$, $P_0$ is the projection onto the subspace $1_\Omega$, by Theorem \ref{thm:PPPisRamanujan}. Since the range $\dim \operatorname{Range} P_V$ is continuous (as a trace), it is constant and $P_V$ project to the dimension one eigen-space associated with the ground state.
We set 
$$\psi = \frac{P_V1_\Omega}{\| P_V1_\Omega \|}, \qquad \rho =\frac{\| H P_V1_\Omega \|}{\| P_V1_\Omega\|}.$$
Note that the semi-group $e^{tH}$ is positivity preserving, so that the ground state has constant sign and therefore  $P_V1_\Omega \not=0$. In fact, it is not hard to see that $\psi$ must is positive almost everywhere. Both the eigenvalue and the eigenvector varies analytically in $V$ and are well defined up to $\| V\| < \tfrac{1}{2} \frac{(d-1)^2}{8}$. The equation $H \varphi = \rho \varphi$ translate on the leaves $\mathcal{L}_\omega$ to
$$ -\frac{1}{2}\Delta_G\varphi_\omega + V_\omega\varphi_\omega = \rho \varphi_\omega.$$
Let $Z_t = e^{tH} 1_\Omega \in L^2(\Omega)$. Note that $1_\Omega, V_\Omega$ and therefore $Z_t$ are all $K$-invariant function, and one has
\begin{align*}
   Z_T(\omega) &= \bE^{e_G}\left[\exp\left(-\int_0^T V(g_t \cdot \omega)\dd t \right)\right] \\
   &= \bE^{o}\left[\exp\left(-\int_0^T V_\omega(X_t) \dd t \right) \right] = Z_{\omega,T}^o,\qquad \omega\, \mu-a.e.
\end{align*}
where the first expectation is over the Brownian motion on G while the second is on $\Hd$ starting at the base point of the canonical frame. More generally, one has $Z_T(g^{-1}\cdot \omega) = Z_{\omega,T}^x$ for any $x\in \Hd, g\in G$ with $g\cdot o = x$. The quantity $\epsilon := \frac{(d-1)^2 }{8}- 2\| V\|$ lower bound the spectral gap, $t_n = 2\ln(n)/\epsilon$, one has 
$$\| e^{-t_n\rho }Z_t - \langle \psi, 1_\Omega \rangle \psi \| \leq e^{-t_n\epsilon} = O\left(\frac{1}{n^2}\right),$$
$$\sum_n \| e^{-t_n \rho }Z_{t_n} - \langle \psi, 1_\Omega \rangle \psi \| < \infty.$$
So that $(e^{-t_n \rho }Z_{t_n})_n$ converges $\mu$-almost everywhere. For $t_n \leq t < t_{n+1}$, we get 
$$ e^{-t\rho}Z_t(\omega) = e^{-t\rho}\bE^{e_G}[\exp(\int_o^t-V(g_s^{-1}\cdot \omega) \dd s)] \leq e^{-t_n\rho } Z_{t_n}(\omega)e^{-(t-t_n)\rho} \exp(\|V \|_{\infty}(t-t_n)).$$ 
This implies the existence of the limit as $t \to \infty$ on a set of full $\mu$-measure. We obtain by continuity the convergence of the normalizing constant $Z_{\omega,T}^x$, simultaneously of all $x$,. Once the convergence of the normalizing constants (as defined by equation \eqref{eq:Ztx})  is proven, the existence of the limiting $Q$-process is classical. For a Borel set of the form $B = \{X_t \in A\} \in \mathcal{F}_t$, and $t<T$, we obtain 

\begin{align*}
Q_{\omega,T}^o(B)&=\frac{1}{Z_{\omega,T}^o}\bE^o\!\left[\exp\!\left(-\int_0^T V_\omega(X_s)\,ds\right)\mathbf 1_{\{X_t\in A\}}\right]
\\
&=\bE^o\!\left[\exp\!\left(-\int_0^t V_\omega(X_s)\,ds\right)\mathbf 1_{\{X_t\in A\}}
\,\frac{Z_{\omega,T-t}^{X_t
}}{Z_{\omega,T}^o}\right] \\
&=\int_A p_t(o,y)\,\bE^{o}\!\left[\exp\!\left(-\int_0^t V_\omega(X_s)\,ds\right)\,\Big|\,X_t=y\right]\,
\frac{Z_{\omega,T-t}^y}{Z_{\omega,T}^o}\,\mathrm{vol}_{\mathbb H^d}(\dd y)\\ 
&\xrightarrow[T \to \infty]{} \int_A p_t(o,y)\, \bE^{o}\!\left[\exp\!\left(-\int_0^t V_\omega(X_s)\,ds\right)\,\Big|\,X_t=y\right]\,
\frac{e^{t\rho}\varphi_\omega(y)}{\varphi_\omega(o)}\,\mathrm{vol}_{\mathbb H^d}(\dd y) 
\\
&=Q_\omega^o(B) 
\end{align*}

This shows the existence of the limiting $Q$-process. The limiting generator has the same form as in \eqref{eq:doobtransform}, 
$$L_\omega := T_{\varphi_\omega}^* (\rho-H_\omega )T_{\varphi_\omega} =\tfrac{1}{2} \Delta_\Hd +  \nabla \log\varphi_\omega\cdot\nabla.$$ 
So far, we have proved the existence of the $Q$-process under the condition $\|V_\omega\|_\infty < \tfrac{1}{2}\tfrac{(d-1)^2}{8}$, without assuming that $V$ is non-negative. More generally, let
\[
V_- := \inf_\Hd V_\omega,
\qquad
V_+ := \sup_\Hd V_\omega,
\qquad
\overline V := \frac{V_- + V_+}{2}
\]
denote the extrema and the midrange of the potential. Assume that
\[
\operatorname{osc}_\Hd V
:= V_+ - V_-
< \frac{(d-1)^2}{8}.
\]
The above argument applies to the shifted generator $-\tfrac12\Delta_\Omega + \overline V$ with a centered perturbation $ \|V_\omega - \overline V\|_\infty < \tfrac{1}{2}\tfrac{(d-1)^2}{8}$, this concludes the proof of Theorem \ref{thm:thm1}.
\end{proof}

\section{Benjamini--Schramm limits and periodization}\label{sec:section5} 

We now turn to the stability of the $Q$-process. The appropriate notion of
closeness is given by the local convergence introduced by
Benjamini--Schramm in \cite{BenjaminiSchramm2001}. We recall this notion in the
case of decorated manifolds; see \cite{AbertBiringer2022}.

\begin{definition}\label{def:BS}
Let $(M_n,V_n)$ be a sequence of finite-volume Riemannian $d$-manifolds
equipped with continuous potentials $V_n\colon M_n\to\mathbb R$.
We say that $(M_n,V_n)$ converges in the \emph{Benjamini--Schramm sense}
(BS) to a random pointed hyperbolic manifold with potential $(M,V,o)$ if
\[
(M_n,V_n,x_n)
\;\xrightarrow[n\to\infty]{\mathrm{law}}\;
(M,V,o),
\]
where $x_n$ is chosen according to the Riemannian volume measure.
The convergence in law is understood with respect to the pointed $C^\infty$-topology on
manifolds together with local uniform convergence of the potentials.
In this case, we write
\[
(M_n,V_n) \xrightarrow{BS} (M,V,o).
\]
\end{definition}

In this paper, we only consider hyperbolic manifolds whose limit is the
hyperbolic space. In this case,
$M_n \xrightarrow{BS} (\Hd,o)$ means that the injectivity radius at a
random point of $M_n$ tends to infinity in probability.
Before proving Theorem~\ref{thm:thm2}, we give an explicit construction of the
foliated space that appears in the BS limit. This is achieved by lifting the
sequence to random copies of $\Hd$ and taking weak limits, after breaking
possible symmetries by adding an independent Poisson point process, in the
spirit of \cite{AbertBiringer2022}.

We define the space of \emph{augmented configurations} by
\begin{equation}\label{eq:augmentedconf}
\hOmega
=
\lV \times \Omega
=
\left\{
\hat{\omega}=(V,\omega)
\;\middle|\;
V\colon \Hd\to\dR \text{ continuous, and }
\omega\subset\Hd \text{ locally finite}
\right\}.
\end{equation}
Here $\lV=C(\Hd,\dR)$ is endowed with the topology of uniform convergence on
compact sets. The space $\hOmega$ is a Polish space.
The group $G$ acts diagonally on $\hOmega$ by
\[
g\cdot \hat{\omega}
:=
(g\cdot V,\, g\cdot \omega),
\]
where $(g\cdot V)(x):=V(g^{-1}\cdot x)$.
The induced foliation is defined by the $G$-orbits as before. Given a sequence $(M_n,V_n)$ of hyperbolic $d$-manifolds with continuous
potentials, we choose a random orthonormal frame $(o_n,u_n)$ and consider the
covering map $\pi_n\colon \Hd \to M_n$ sending the canonical frame $(o,u)$ to
$(o_n,u_n)$. This yields a random lifted potential on $\Hd$
\begin{equation}\label{eq:fromMntoHd}
  \widetilde V_n := V_n \circ \pi_n.  
\end{equation}
Let $\widehat{\mu}_n$ be the probability measure on
$\hOmega$ given by the joint law of
$(\widetilde V_n,\omega)$, where $\omega$ is an independent Poisson point
process on $\Hd$.
If $(M_n,V_n) \xrightarrow{BS} (\Hd,V,o)$, then the measures weakly converge, the limit
$$\hat{\mu} := \lim_n \hat{\mu}_n$$
defines the limiting measured foliation. As in \ref{rmk:GActfreelyOnPPP}, for
$\hat{\mu}$-almost every $\hat{\omega}$, the leaf
$\mathcal{L}_{\hat{\omega}}$ is diffeomorphic to $G$.
The fact that the foliated Laplacian $\Delta_{\hOmega}$ is symmetric, when $\hOmega$ is endowed with a measure $\hat{\mu}$ follows from unimodularity, see \cite{AbertBiringer2022}. With this construction at hand, we are ready to prove Theorem~\ref{thm:thm2}.

\begin{proof}[Proof of Theorem \ref{thm:thm2}]
Consider the sequence of Schr\"odinger operators on $L^2(M_n,\vol_{M_n})$,
\[
H_n = -\tfrac{1}{2}\Delta_{M_n} + V_n,
\]
and let $\varphi_n$ denote the normalized positive eigenfunction associated with the lowest eigenvalue $\rho_n$. On each $M_n$, the $Q$-process is generated by the Doob transform $L_n$ associated with $\varphi_n$, see \eqref{eq:doobtransform}. Since $(M_n)$ is an expander sequence, the spectral gap of $-\tfrac12\Delta_{M_n}$ is uniformly bounded from below.
Together with the assumption $\limsup_n\|V_n\|_\infty<\lambda$, this implies that
the limiting random potential $V_\omega$ satisfies the assumptions of
Theorem~\ref{thm:thm1}. As in \eqref{ref:FoliatedSCH} we denote by
\[
H = -\tfrac{1}{2}\Delta_{\hOmega} + V
\]
the limiting foliated Schr\"odinger operator on $L^2(\hOmega,\hat{\mu})$. We denote the spectral gap of the foliated Laplacian
$$\lambda_1(\tfrac{-1}{2}\Delta_{\hOmega}):=\inf (\spec \left(\tfrac{-1}{2}\Delta_{\hOmega} \right) \setminus \{0\}).$$
Note that $\lambda_1(\tfrac{-1}{2} \Delta_{\hOmega})$ depends implicitly on the measure $\widehat{\mu}$, and that it need not correspond to an eigenvalue in the presence of continuous spectrum. We claim that this quantity is at least the spectral gap of the sequence, namely
\[
\lambda := \liminf_{n\to\infty} \lambda_1(M_n).
\]
This lower semicontinuity result is classical in the spectral theory of graphings \cite{abert2025uniform}, and we adapt it here to the language of foliations. By Weyl's criterion, one can find a sequence $\psi_k$ of functions orthogonal to $\mathbf{1}_{\hOmega}$ such that
\[
-\tfrac{1}{2}\langle \psi_k , \Delta_{\hOmega} \psi_k \rangle
=
\lambda_1(\tfrac{-1}{2}\Delta_{\hOmega}) + o(1).
\]
By density, we may assume that the $\psi_k$ are smooth in the leaf direction and is a factor of the a finite neighborhood of the scenery $\hat{\omega}$ as seen from the base point $o$. That is, for some sequence slowly increasing $R_k \to \infty$, the value $\psi_k(\widehat{\omega})$ depends only on the restriction
\[
\hat{\omega}\big|_{B(o,R_k)}
=
\bigl(V|_{B(o,R_k)},\, \omega \cap B(o,R_k)\bigr).
\]
But such function $\psi_k$ can be copied on the the foliation $(\hOmega,\hat{\mu_n})$, and on most point (where the injectivity radius is larger than $R_k$) to the compact surfaces $M_n$ following the reverse construction given by equation \eqref{eq:fromMntoHd}. This implies that $ \lambda_1(\tfrac{-1}{2}\Delta_{\hOmega}) \geq \lambda := \liminf_{n\to\infty} \lambda_1(\tfrac{-1}{2}M_n).$.

Now, since $\|V\|_\infty \le \limsup\|V_n\|_\infty < \lambda$, we can adapt the proof of Theorem~\ref{thm:thm1} to the foliation $\widehat{\Omega}$ and deduce that the limiting $Q$-process exists. Let $\varphi\in L^2(\widehat{\Omega},\widehat{\mu})$ denote the positive ground state of the limiting foliated Schr\"odinger operator $H$. The corresponding eigenvalue $\rho$ is separated away from the rest of the spectrum by at least $\epsilon = \tfrac{1}{2}(\lambda - \limsup\|V_n\|_\infty)$. For $\widehat{\mu}$-almost every $\widehat{\omega}$, the limiting $Q$-process on the leaf $\mathcal{L}_{\widehat{\omega}}$ is given by the Doob transform associated with the eigenwave $\varphi_{\widehat{\omega}}(\cdot)$, viewed as a function on $\Hd$. 

It remains to prove that the $Q$-processes on $M_n$ converge to this limiting process.  
Let $\nu_{H_n}^{\mathbf{1}_{M_n}}$ be the spectral measure of $H_n$ associated with the normalized constant function, and let $\nu_{H}^{\mathbf{1}_{\widehat{\Omega}}}$ denote the analogous quantity for the limiting foliation. One has
\begin{align*}
\frac{1}{\operatorname{Vol} M_n }
\langle \mathbf{1}_{M_n}, e^{-t H_n} \mathbf{1}_{M_n} \rangle
&=
\int e^{-t\lambda}\,\nu_{H_n}^{\mathbf{1}_{M_n}}(\dd\lambda) =
\int_{M_n}
\bE^{x_n}\!\left[\exp\!\left(-\int_0^t V_n(X_s)\,\dd s\right)\right]
\frac{\vol_{M_n}(\dd x_n)}{\operatorname{Vol} M_n},
\\[1ex]
\langle \mathbf{1}_{\widehat{\Omega}}, e^{-t H} \mathbf{1}_{\widehat{\Omega}} \rangle
&=
\int e^{-t\lambda}\,\nu_{H}^{\mathbf{1}_{\widehat{\Omega}}}(\dd\lambda)=
\int_{\widehat{\Omega}}
\bE^{o}\!\left[\exp\!\left(-\int_0^t V(X_s)\,\dd s\right)\right]
\widehat{\mu}(\dd \widehat{\omega}),
\end{align*}
where in the last integral $\widehat{\omega}=(V,\omega)$.

By Benjamini--Schramm convergence, the pointed manifolds $(M_n,x_n)$ have injectivity radius tending to infinity in probability, and the local laws of the potentials converge to that of the limiting stationary potential $V$. Consequently, for each fixed $t>0$, the Laplace transforms converge pointwise. By the L\'evy continuity theorem for Laplace transforms \cite[Ch.~XIII]{feller1991introduction}, we obtain weak convergence of the spectral measures.

By the spectral gap assumption, the ground eigenvalue $\rho_n \in [0,\| V_n \|_\infty]$ is isolated from the remainder of spectrum of $H_n$ contained in $[\lambda,\infty)$. It follows that
\[
\rho_n \longrightarrow \rho,
\qquad
\nu_{H_n}^{\mathbf{1}_{M_n}}(\{\rho_n\}) \longrightarrow \nu_{H}^{\mathbf{1}_{\widehat{\Omega}}}(\{\rho\}).
\]
Since
\[
Z_{n,t}^{x_n}
:=
\bE^{x_n}\!\left[\exp\!\left(-\int_0^t V_n(X_s)\,\dd s\right)\right]
\]
is B-S continuous and by spectral decomposition,
\[
\varphi_n(\cdot)
=
\lim_{t\to\infty}
\frac{e^{-t\rho_n}Z_{n,t}(\cdot)}{\mu_n(\{\rho_n\})},
\]
with control of the convergence uniformly in $n$,
$$\| \varphi_n(\cdot) - \frac{e^{-t\rho_n}Z_{n,t}(\cdot)}{\mu_n(\{\rho_n\})}\|_{L^2(M_n)} \leq e^{- \epsilon t},$$
where $\epsilon$ is a lower bounds for the spectral gap. We obtain the converge of $\sqrt{\operatorname{Vol}(M_n)}\varphi_n(\cdot)$ to $\varphi(\cdot)$ in the Benjamini--Schramm sense. Finally, the $Q$-process on $M_n$ has transition density
\[
p_{L_n}(t,x,y)
= p_{{ \tfrac{\Delta_{M_n}}{2} }}(t,x,y)
e^{t\rho_n}\frac{\varphi_n(y)}{\varphi_n(x)}
\bE^{x}\!\left[
\left.\exp\!\left(-\int_0^t V_n(X_s)\,\dd s\right)\right| X_t=y
\right].
\]
Each terms in B-S concluding the proof.
\end{proof}

We end the paper with the periodization stability.

\begin{proof}[Proof of Theorem~\ref{thm:thm3}]
It is known that there exists a sequence of compact hyperbolic $d$-manifolds
$(M_n)$ that Benjamini--Schramm converges to $\Hd$ and has a uniform
spectral gap
\[
\lambda := \liminf_{n\to\infty}
\lambda_1\!\left(-\tfrac12\Delta_{M_n}\right) >0.
\]

This spectral gap follows from Clozel’s theorem \cite{clozel} that states that for any symmetric space \(X\) of a semisimple Lie group (in particular \( \Hd \)), there exists a constant \(\epsilon_d>0\), depending only on \(X\), such that every congruence lattice \(\Gamma\) in \(G\) yields an orbifold \(\Gamma\backslash X\) with spectral gap at least \(\epsilon_d\).

For cocompact arithmetic lattice \(\Gamma<\mathrm{Isom}(\Hd)\), by \cite{7sam}, any sequence of congruence subgroups of \(\Gamma\) Benjamini--Schramm converges to \(\Hd\). We define $\lambda_d$ to be the maximal value that can be achieved, and fix such a sequence $(M_n)$.

Let $\omega_n$ be a Poisson point process on
$M_n$. Since $V_\omega$ is a factor of the PPP, it can be approximated by
finite-range factors $V_{R_n,\omega}$, where $R_n\to\infty$ sufficiently slowly
compared to the injectivity radius at a uniformly chosen point of $M_n$.

Using the realization $\omega_n$ of the PPP on $M_n$, we define a periodic
potential $V_n$ on $M_n$ by copying the finite-range rule defining
$V_{R_n,\omega}$. By construction, the sequence $(M_n,V_n)$ satisfies the
assumptions of Theorem~\ref{thm:thm2}. Hence the associated $Q$-processes on
$M_n$ converge in the Benjamini--Schramm sense to the limiting $Q$-process on
$(\Hd,V_\omega)$.

Finally, lifting each $Q$-process on $M_n$ to the universal cover $\Hd$
produces a periodic $Q$-process. This yields the desired periodic approximation
of the limiting $Q$-process.
\end{proof}

\bibliographystyle{amsplain}   
\bibliography{refs}

\end{document}